\renewcommand*\showkeyslabelformat[1]{%
\noexpandarg%
\StrSubstitute{#1}{ }{\textvisiblespace}[\TEMP]%
\begin{minipage}[t]{\marginparwidth}%
  \normalfont\small\ttfamily\(\{\){\color{red!50!black}\expandafter\seqsplit\expandafter{\TEMP}}\(\}\)\end{minipage}%
}
\newcommand{\takeout}[1]{\empty}
\title{Finitely Presentable Algebras For Finitary Monads}
\author{J.~Ad\'{a}mek, S.~Milius, L.~Sousa and T.~Wi{\ss}mann}
\thanks{J.~Ad\'amek was supported by the Grant Agency of the Czech Republic under the grant 19-009025. \\ S. Milius and T. Wi\ss\/mann acknowledge support by the Deutsche Forschungsgemeinschaft (DFG) under project MI~717/5-2.
\\L.~Sousa was partially supported by the Centre for Mathematics of the
University of Coimbra -- UID/MAT/00324/2019, funded by the Portuguese
Government through FCT/MEC and co-funded by the European Regional Development Fund through the Partnership Agreement PT2020.}
\address{%
  Department of Mathematics, Faculty of Electrical Engineering, Czech Technical University in Prague, Czech Republic\\[5pt]
  Lehrstuhl f\"ur Informatik 8 (Theoretische Informatik), Friedrich-Alexander-Universit\"at Er\-lan\-gen-N\"urn\-berg, Germany \\[5pt]
  CMUC, University of Coimbra,  Portugal \&  ESTGV, Polytechnic Institute of Viseu, Portugal\\[5pt]
}
\keywords{Finitely presentable object, finitely generated object, (strictly) locally finitely presentable category, finitary functor, finitely bounded functor, regular monad}
\date{\today}
\begin{document}
%
%
\FXRegisterAuthor{sm}{asm}{SM}
\FXRegisterAuthor{ja}{aja}{JA}
\FXRegisterAuthor{ls}{als}{LS}
\FXRegisterAuthor{tw}{twm}{TW}

\maketitle

\begin{abstract}
  For finitary regular monads $\T$ on locally finitely presentable
  categories we characterize the finitely presentable objects in the
  category of $\T$-algebras in the style known from general algebra:
  they are precisely the algebras presentable by finitely many
  generators and finitely many relations.
\end{abstract}

\section{Introduction}
\label{sec:intro}
If $\T = (T,\eta,\mu)$ is a finitary monad on $\Set$, then the category
$\Set^{\T}$ of its algebras is nothing else than the classical concept of a
variety of algebras. An algebra $A$ is called {\em finitely presentable} (in
General Algebra) if it can be presented by a finite set of generators and a
finite set of relations. This means that
there exists a finite set $X$ (of generators) such that $A$ can be obtained as a
quotient of the free algebra $(TX,\mu_X)$ modulo a finitely generated congruence
$E$. A congruence $E$ is \emph{finitely generated} if there is a finite
subrelation $R \subseteq E$ such that $E$ is the smallest congruence on $TX$
containing $R$.\footnote{Note that this does not imply that $E$, regarded as a
  subalgebra of $(TX, \mu_X)^2$, is a finitely generated algebra.} For monads on
$\Set$, the above concept coincides with $A$ being a finitely presentable object
of $\Set^{\T}$ (see~\cite[Corollary~3.13]{AdamekR94}). In the present paper, we
generalize this to finitary \emph{regular monads}~\cite{Manes76}, i.e.~those
preserving regular epimorphisms, on locally finitely presentable categories $\A$
that have regular factorizations. We introduce the concept of a finitely
generated congruence (see Definition~\ref{D:finGenCong}) and prove that the
finitely presentable objects of $\A^{\T}$ are precisely the quotients of free
algebras $(T X, \mu_X)$ with $X$ finitely presentable modulo finitely generated
congruences. We also characterize
finitely generated algebras for finitary monads; here no condition on the monad
is required.

The presented results can be also formulated for locally
$\lambda$-presentable categories and algebras for $\lambda$-accessible monads
that are $\lambda$-presentable or $\lambda$-generated.

\section{Preliminaries}\label{sec:prel}

In this section we present properties on  finitely presentable and finitely generated objects which will be useful in the subsequent sections.

 Recall that an
object $A$ in a category $\A$ is called \emph{finitely presentable} if its
hom-functor $\A(A,-)$ preserves filtered colimits, and $A$ is called
\emph{finitely generated} if $\A(A,-)$ preserves filtered colimits of
monomorphisms -- more precisely, colimits of filtered diagrams $D\colon \D
\to \A$ for which $Dh$ is a monomorphism in $\A$ for every morphism
$h$ of $\D$.  

\begin{notation}
  For a category $\A$ we denote by
  \[
    \Afp \qquad\text{and}\qquad\Afg
  \]
  full subcategories of $\A$ representing (up to isomorphism) all finitely
  presentable and finitely generated objects, respectively.

  Subobjects $m\colon M \monoto A$ with $M$ finitely generated are called
  \emph{finitely generated subobjects}.
\end{notation}

Recall that $\A$ is a \emph{locally finitely presentable} category, shortly
\emph{lfp} category, if it is cocomplete, $\Afp$ is essentially small, and every
object is a colimit of a filtered diagram in $\Afp$.

We now recall a number of standard facts about
lfp categories~\cite{AdamekR94}.

\begin{remark}\label{R:prelim}
  Let $\A$ be an lfp category.
  \begin{enumerate}
  \item \label{I:factorization} By~\cite[Proposition~1.61]{AdamekR94}, $\A$ has
    (strong epi, mono)-factorizations of morphisms.

  \item \label{I:canColim} By~\cite[Proposition~1.57]{AdamekR94}, every object
    $A$ of $\A$ is the colimit of its \emph{canonical filtered diagram}
    \[
      D_A\colon \Afp/A \to \A \qquad (P \xrightarrow{p} A) \mapsto P, 
    \]
    with colimit injections given by the $p$'s.

  \item \label{I:fingen} By~\cite[Proposition~1.69]{AdamekR94}, an object $A$ is
    finitely generated iff it is a strong quotient of a finitely
    presentable object, i.e.~there exists a finitely presentable
    object $A_0$ and a strong epimorphism $e\colon A_0 \epito A$.
  \item It is easy to verify that every split quotient of a finitely
    presentable object is finitely presentable again.
  \end{enumerate}
\end{remark}

\begin{remark} \label{R:refl}
  Colimits of filtered diagrams $D\colon \D\to \Set$ are precisely those cocones
  $c_i\colon D_i\to C$ ($i\in \obj \D$) of $D$ that have the following
  properties:
  \begin{enumerate}
  \item $(c_i)$ is jointly surjective, i.e.~$C=\bigcup c_i[D_i]$,  and
  \item given $i$ and elements $x,y\in D_i$ merged by $c_i$, then they are also
    merged by a connecting morphism $D_i\to D_j$ of $D$.
  \end{enumerate}
  This is easy to see: for every cocone $c_i'\colon D_i\to C'$ of $D$ define
  $f\colon C\to
  C'$ by choosing for every $x\in C$ some $y\in D_i$ with $x=c_i(y)$ and putting
  $f(x) = c_i'(y)$. By the two properties above, this is well defined and is unique
  with $f\cdot c_i = c_i'$ for all $i$.
\end{remark}

Recall that an adjunction whose right adjoint is finitary is called a
\emph{finitary adjunction}.
\begin{lemma}\label{lem:fppres}
  Let $L \dashv R\colon \B \to \A$ be a finitary adjunction between lfp
  categories. Then we have:
  \begin{enumerate}
  \item $L$ preserves both finitely presentable objects and finitely generated ones;
  \item if $L$ is fully faithful, then an object $X$ is finitely presentable in $\A$ iff  $LX$ is finitely presentable in $\B$;
  \item if, moreover, $L$ preserves monomorphisms, then $X$ is  finitely generated  in $\A$ iff  $LX$ is  finitely generated  in $\B$. 
  \end{enumerate}
\end{lemma}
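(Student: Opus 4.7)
\medskip

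\noindent\textbf{Proof plan.} The overarching strategy is to convert every statement about hom-functors out of $LX$ into a statement about hom-functors out of $X$ (or vice versa) via the adjunction isomorphism $\B(LX,-)\cong \A(X,R-)$, and then exploit the preservation properties of $L$ and $R$. Two standing facts drive the argument: $R$ is finitary by assumption, and $R$ preserves monomorphisms automatically (being a right adjoint, it preserves all limits, hence equalizers).

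For part (1), suppose $X$ is finitely presentable in $\A$. Then $\A(X,-)$ preserves filtered colimits, and since $R$ does too, the composite $\A(X,R-)\cong \B(LX,-)$ preserves them; thus $LX$ is finitely presentable in $\B$. For finitely generated $X$, I would use the same composite argument but applied to a filtered diagram $D\colon \D\to \B$ whose connecting morphisms are monos: apply $R$ to $D$, which yields a filtered diagram of monos in $\A$ (since $R$ preserves monos), use finite generation of $X$ to commute $\A(X,-)$ past its colimit, and use finitariness of $R$ to identify $R(\colim D)$ with $\colim RD$.

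For the ``if'' direction of (2), assume $LX$ is finitely presentable and let $D\colon \D\to \A$ be filtered with colimit $c_i\colon D_i\to A$. Since $L$ is a left adjoint it preserves this colimit, so $LA=\colim LD_i$. Finite presentability of $LX$ gives $\B(LX,LA)\cong \colim \B(LX,LD_i)$, and then full faithfulness of $L$ rewrites both sides as $\A(X,A)\cong \colim \A(X,D_i)$, as required. Part (3) is obtained by the same template, restricted to filtered diagrams with monic connecting morphisms: here the extra hypothesis that $L$ preserves monomorphisms ensures that $LD$ is again a filtered diagram of monos, so that finite generation of $LX$ applies.

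There is no genuine obstacle; the only point that requires a moment of care is the finitely-generated case in (1), where one must check that $R$ sends a filtered diagram of monomorphisms to another such diagram — but this is immediate from $R$ being a right adjoint. All other steps are routine applications of the adjunction isomorphism, finitariness of $R$, cocontinuity of $L$, and full faithfulness (for (2) and (3)).
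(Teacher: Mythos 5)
Your proposal is correct and follows essentially the same route as the paper: part (1) via the composite isomorphism $\B(LX,-)\cong\A(X,R-)$ together with finitarity of $R$ (and preservation of monomorphisms by the right adjoint for the finitely generated case), and parts (2)--(3) by pushing a filtered diagram in $\A$ forward along the cocontinuous $L$, applying finite presentability (resp.\ generation, using that $L$ preserves monomorphisms) of $LX$, and transporting back via full faithfulness. No substantive differences to report.
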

\proof
  \begin{enumerate}
  \item Let $X$ be a finitely presentable object of $\A$ and let $D\colon \D \to \B$ be a
    filtered diagram. Then we have the following chain of natural
    isomorphisms
    \begin{align*}
      \B(LX, \colim D) 
      & \cong \A(X, R(\colim D))\\
      & \cong \A(X, \colim RD) \\
      & \cong \colim(\A(X, RD(-)) \\
      & \cong \colim(\B(LX, D(-)).
    \end{align*}
    This shows that $LX$ is finitely presentable in $\B$. Now if $X$ is  finitely generated  in $\A$ and
    $D$ is a directed diagram of monomorphisms, then $RD$ is also a directed diagram of
    monomorphisms (since the right adjoint $R$ preserves monomorphisms). Thus, the
    same reasoning proves $LX$ to be  finitely generated  in $\B$. 
  \item Suppose that $LX$ is finitely presentable in $\B$ and that $D\colon \D \to \A$ is a
    filtered diagram. Then we have the following chain of natural
    isomorphisms:
    \begin{align*}
      \A(X,\colim D) & \cong \B(LX, L(\colim D)) \\
      & \cong \B(LX, \colim LD )\\
      & \cong \colim (\B(LX, LD(-)) \\
      & \cong \colim (\A(X,D(-))
    \end{align*}
    Indeed, the first and last step use that $L$ is fully faithful,
    the second step that $L$ is finitary and the third one that
    $LX$ is finitely presentable in $\B$. 
  \item If $LX$ is finitely generated in $\B$ and $D\colon \D \to \A$ a directed
    diagram of monomorphisms, then so is $LD$ since $L$ preserves monomorphisms.
    Thus the same reasoning as in~(2) shows that $X$ is finitely generated in
    $\A$. \endproof
  \end{enumerate}
\begin{lemma} \label{L:power}
  Let $\A$ be an lfp category and $I$ a set. An object in the power category $\A^I$ is
  finitely presentable iff its components
  \begin{enumerate}
  \item\label{L:power:allfp} are finitely presentable in $\A$, and
  \item\label{L:power:most0} all but finitely many are initial objects.
  \end{enumerate}
\end{lemma}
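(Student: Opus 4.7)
The plan is to prove both directions by exploiting, for each $i \in I$, the chain of adjunctions $L_i \dashv \pi_i \dashv R_i$ between $\A$ and $\A^I$: here $\pi_i$ is the $i$-th projection, $L_i$ sends $A$ to the tuple with $A$ at position $i$ and the initial object $0$ elsewhere, and $R_i$ sends $A$ to the tuple with $A$ at position $i$ and the terminal object elsewhere. Since colimits in $\A^I$ are computed pointwise, both $\pi_i$ and $R_i$ preserve filtered colimits, so both adjunctions satisfy the finitary hypothesis of Lemma~\ref{lem:fppres}.

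For the sufficiency direction I will argue as follows. Assume each $A_i$ is finitely presentable and $A_i \cong 0$ for all $i$ outside some finite set $F \subseteq I$. A pointwise computation shows $(A_i)_{i \in I} \cong \coprod_{i \in F} L_i(A_i)$ in $\A^I$. Each summand $L_i(A_i)$ is finitely presentable by Lemma~\ref{lem:fppres}(1) applied to $L_i \dashv \pi_i$, and finite coproducts of finitely presentable objects are finitely presentable (hom-functors turn them into finite products of $\Set$-valued functors, and finite products commute with filtered colimits in $\Set$). Hence $(A_i)$ is finitely presentable.

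For the necessity direction, suppose $(A_i)$ is finitely presentable in $\A^I$. Condition~(\ref{L:power:allfp}) follows immediately by applying Lemma~\ref{lem:fppres}(1) to $\pi_i \dashv R_i$: the projection $\pi_i$ is a left adjoint in a finitary adjunction, so each $A_i = \pi_i((A_j))$ is finitely presentable. For condition~(\ref{L:power:most0}) I plan to present $(A_j)$ as a filtered colimit of its \emph{finite-support truncations} $B^F = (B^F_j)_{j \in I}$, defined by $B^F_j = A_j$ for $j \in F$ and $B^F_j = 0$ otherwise, indexed by the directed poset of finite subsets $F \subseteq I$ under inclusion with the evident connecting morphisms. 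At each component $j$ the subsystem of those $F$ containing $j$ is cofinal and has constant value $A_j$, so the colimit is $(A_j)$ itself. Finite presentability then forces the identity on $(A_j)$ to factor through some $B^F$, yielding a retraction $(A_j) \to B^F \to (A_j)$; at every component $j \notin F$ this exhibits $A_j$ as a retract of the initial object $0$, and a brief diagram chase (using that any $f \colon A_j \to X$ satisfies $f = f \cdot s \cdot r$ where $s,r$ are the section and retraction) shows that a retract of an initial object is initial. The only step requiring genuine thought is setting up the filtered-colimit presentation by finite-support truncations; everything else reduces to the preliminary lemmas.
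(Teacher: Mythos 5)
Your proposal is correct and follows essentially the same route as the paper: the adjunctions $L_i \dashv \pi_i \dashv R_i$ with Lemma~\ref{lem:fppres}(1) for sufficiency and for condition~(\ref{L:power:allfp}), and the directed diagram of finite-support truncations with a split-epimorphism/retract-of-initial argument for condition~(\ref{L:power:most0}). The only difference is that you spell out details the paper leaves implicit (finitarity of $R_i$, the cofinality argument identifying the colimit of the truncations), which is fine.
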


\begin{proof}
  Denote by $0$ and $1$ the initial and terminal objects, respectively. Note that for every $i\in I$ there are two fully faithful functors
  $L_i,R_i\colon \A\hookrightarrow \A^I$ defined by:
  \[
    \big(L_i(X)\big)_j = \begin{cases}
      X&\text{if }i = j\\
      0&\text{if }i \neq j\\
    \end{cases}
    \qquad
    \text{ and }
    \qquad
    \big(R_i(X)\big)_j = \begin{cases}
      X&\text{if }i = j\\
      1&\text{if }i \neq j\\
    \end{cases}
  \]
  For every $i\in I$ there is also a canonical projection $\pi_i\colon \A^I\to
  \A$, $\pi_i\big((X_j)_{j\in I}\big) = X_i$.  We have the following adjunctions:
  \[
    L_i \dashv \pi_i \dashv R_i.
  \]
  
  Sufficiency.  Let
  $A = (A_i)_{i\in I}$ satisfy \ref{L:power:allfp} and
  \ref{L:power:most0}, then $L_i(A_i)$ is finitely presentable in $\A^I$
  by Lemma~\ref{lem:fppres}(1). Thus, so is $A$, since it is the
  finite coproduct of those $L_i(A_i)$, with $A_i$ not
  initial. Obviously, $L_i(A_i)$ is finitely presentable.

  Necessity. Let $A =(A_i)_{i \in I}$ be finitely presentable in
  $\A^I$. Then for every $i\in I$, $\pi_i(A)$ is finitely presentable
  in $\A$ by Lemma~\ref{lem:fppres}(1), proving item
  \ref{L:power:allfp}. To verify \ref{L:power:most0}, for every finite
  set $J\subseteq I$, let $A_J$ have the components $A_j$ for every
  $j\in J$ and $0$ otherwise. These objects $A_J$ form an obvious
  directed diagram with a colimit cocone $a_J\colon A_J\to A$. Since
  $A$ is finitely presentable, there exists $J_0$ such that $\id_A$
  factorizes through $a_{J_0}$, i.e.~$a_{J_0}$ is a split
  epimorphism. Since a split quotient of an initial object is initial,
  we conclude that~\ref{L:power:most0} holds.
\end{proof}

\section{Finitely Presentable Algebras}\label{sec:alg}
\label{sec:genpres}

In the introduction we have recalled the definition of a finitely
presentable algebra from General Algebra and the fact that for a
finitary monad $\T$ on $\Set$, this is equivalent to $A$ being a
finitely presentable object of $\Set^\T$. We now generalize this to finitary
\emph{regular monads}~\cite{Manes76}, i.e.~those preserving
regular epimorphisms, on lfp categories that have regular
factorizations.

First, we turn to characterizing finitely generated algebras for
\emph{arbitrary} finitary monads.

\begin{remark}
  Let $\T$ be a finitary monad on an lfp category $\A$. Then the
  Eilenberg-Moore category $\A^\T$ is also
  lfp~\cite[Remark~2.78]{AdamekR94}. Thus, it has (strong epi,
  mono)-factorizations. The monomorphisms in $\A^\T$, representing
  \emph{subalgebras}, are precisely the $\T$-algebra morphisms
  carried by a monomorphism of $\A$ (since the forgetful functor $\A^\T
  \to \A$ creates limits). The strong epimorphisms of $\A^\T$,
  representing \emph{strong quotient algebras}, need not coincide with
  those carried by strong epimorphisms of $\A$ -- we do not assume that $\T$ preserves strong epimorphisms.
\end{remark}
Recall our terminology that a finitely generated subobject of an object $A$  is
one represented by a monomorphism $m\colon  M \monoto A$  with $M$ a finitely generated
object. 
\begin{notation} Throughout this section given a $\T$-algebra morphism $f\colon
  X\to Y$ we denote by $\Im f$ its image in $\A^{\T}$. That is, we have a strong
  epimorphism $e\colon X\epito \Im f$ and a monomorphism $m\colon \Im f\monoto B$ in $\A^\T$ with $f=m\cdot e$.
\end{notation}
\begin{definition}
  An algebra $(A,a)$ for $\T$ is said to be \emph{generated by
    a subobject} $m\colon  M \monoto A$ of the base category $\A$ if no
  proper subalgebra of $(A,a)$ contains $m$.  
\end{definition}
The phrase ``$(A,a)$ is generated by a finitely generated subobject'' may
sound strange, but its meaning is clear: there exists a subobject $m\colon 
M \monoto A$ with $M$ in $\Afg$ such that $m$ does not factorize
through any proper subalgebra of $(A,a)$.
\begin{example}\label{E:ffp}
  The free algebras on finitely presentable objects are shortly called
  \emph{ffp algebras} below: they are the algebras $(TX, \mu_X)$ with $X$
  finitely presentable. 
  \begin{enumerate}
  \item Every ffp algebra is generated by a finitely
  generated object: factorize the unit $\eta_X\colon  X \to TX$ in $\A$ as a strong
  epimorphism $e\colon  X \epito M$ (thus, $M$ is finitely generated by
  Remark~\ref{R:prelim}(5)) followed by a monomorphism $m\colon  M \monoto
  TX$. Using the universal property, it is easy to see that $m$
  generates $(TX, \mu_X)$; indeed, suppose we had a subalgebra $s\colon 
  (A,a) \monoto (TX, \mu_X)$ containing $m$, via $n\colon  M \monoto A$,
  say. Then the unique extension of $n \cdot e\colon  X \to A$ to a
  $\T$-algebra morphism $h\colon  (TX, \mu_X) \to (A,a)$ satisfies $s \cdot
  h = \id_{(TX,\mu_X)}$. Thus, $s$ is an isomorphism. 
\item \label{ffpIsFp} Every ffp algebra is finitely presentable in $\A^{\T}$: apply
  Lemma~\ref{lem:fppres} 
  to the forgetful functor
  $R\colon\A^{\T}\to \A$ and its left adjoint $LX=(TX, \mu_X)$.
  \end{enumerate}
\end{example}
\begin{theorem}\label{T:fg}
  For every finitary monad $\T$ on an lfp category $\A$ the following
  conditions on an algebra $(A,a)$ are equivalent:
  \begin{enumerate}
  \item $(A,a)$ is generated by a finitely generated subobject,
  \item $(A,a)$ is a strong quotient algebra of an ffp algebra, and
  \item $(A,a)$ is a finitely generated object of $\A^\T$.
  \end{enumerate}
\end{theorem}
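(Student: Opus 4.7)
My strategy is to prove the cycle $(1)\Rightarrow(2)\Rightarrow(3)\Rightarrow(2)\Rightarrow(1)$. The step $(2)\Rightarrow(3)$ is immediate: an ffp algebra is finitely presentable in $\A^\T$ by Example~\ref{E:ffp}(\ref{ffpIsFp}), and every strong quotient of a finitely presentable object is finitely generated by Remark~\ref{R:prelim}(\ref{I:fingen}).

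The equivalence $(1)\Leftrightarrow(2)$ reduces to two diagonal fill-in arguments. For $(1)\Rightarrow(2)$, suppose $(A,a)$ is generated by $m\colon M\monoto A$ with $M\in\Afg$. Choose a strong epi $e\colon X\epito M$ with $X\in\Afp$ (Remark~\ref{R:prelim}(\ref{I:fingen})) and extend $m\cdot e$ to a $\T$-algebra morphism $h\colon (TX,\mu_X)\to (A,a)$. Factoring $h = s\cdot q$ as a strong epi followed by a mono in $\A^\T$, we have $m\cdot e = s\cdot (q\cdot\eta_X)$, and the fill-in against $e$ exhibits $m$ as factoring through the subalgebra $s$; since $m$ generates $(A,a)$, $s$ is an iso and $h$ is a strong epi. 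Conversely, for $(2)\Rightarrow(1)$, let $q\colon (TX,\mu_X)\epito (A,a)$ be a strong epi in $\A^\T$ with $X\in\Afp$, and factor $q\cdot\eta_X = m\cdot e$ in $\A$; then $M\in\Afg$. For any subalgebra $s\colon (B,b)\monoto (A,a)$ with $m = s\cdot n$, the free extension $\bar h\colon (TX,\mu_X)\to (B,b)$ of $n\cdot e$ satisfies $s\cdot\bar h\cdot\eta_X = q\cdot\eta_X$, and uniqueness of free extensions forces $s\cdot\bar h = q$; the fill-in between the strong epi $q$ and the mono $s$ then makes $s$ an iso.

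The substantive implication is $(3)\Rightarrow(2)$. I would write $A = \colim_i X_i$ as its canonical filtered colimit in $\Afp$ (Remark~\ref{R:prelim}(\ref{I:canColim})) with injections $x_i\colon X_i\to A$, and set $h_i = a\cdot Tx_i\colon (TX_i,\mu_{X_i})\to (A,a)$. These assemble into a cocone whose induced morphism from $\colim_i (TX_i,\mu_{X_i}) = (TA,\mu_A)$ in $\A^\T$ (the free functor, being a left adjoint, preserves the colimit) is the counit $a$, which is the regular, hence strong, epi in $\A^\T$ coequalizing $\mu_A$ and $Ta$. Since $\A^\T$ is lfp, images commute with filtered colimits there, so the directed union $\bigcup_i \Im h_i$ of subalgebras of $(A,a)$ equals $(A,a)$ itself. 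Finite generation of $(A,a)$ then forces $\id_{(A,a)}$ to factor through some single $\Im h_i\monoto (A,a)$, making $h_i$ a strong epi from the ffp algebra $(TX_i,\mu_{X_i})$. The main obstacle is precisely this interchange of images with filtered colimits: without it, finite generation would only yield a strong epi from the full filtered colimit, not from a single ffp algebra.
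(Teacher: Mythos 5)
Your proof is correct and takes essentially the same route as the paper: the same diagonal fill-in arguments for $(1)\Leftrightarrow(2)$, the same use of ffp algebras being finitely presentable for $(2)\Rightarrow(3)$, and for $(3)\Rightarrow(2)$ the same argument that $(A,a)$ is the directed union of the images of $a\cdot Tx_i$ over the canonical filtered diagram, where your appeal to images commuting with filtered colimits in lfp categories is exactly the content the paper delegates to the cited lemma of \cite{amsw19}. The only cosmetic differences are that in $(1)\Rightarrow(2)$ you factor the free extension in $\A^\T$ and show the mono part is invertible rather than arguing via extremal epimorphisms, and you invoke preservation of colimits by the left adjoint where the paper invokes finitarity of $T$.
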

\begin{proof}
  (3)$\implies$(2) First observe that the cocone
  $Tf\colon  TX \to TA$, where $(X,f)$ ranges over $\Afp/A$, is collectively
  epimorphic since $T$ preserves the filtered colimit
  $A = \colim D_A$ of Remark \ref{R:prelim}(2). For every $f\colon  X \to A$ consider its unique
  extension to a $\T$-algebra morphism
  $a \cdot Tf\colon  (TX, \mu_X) \to (A,a)$ and form its
 factorization in $\A^{\T}$:
  \[
    \xymatrix{
      TX
      \ar[d]_{Tf}
      \ar@{->>}[r]^-{e_f}
      &
      \Im(a \cdot Tf)
      \ar@{ >->}[d]^{m_f}
      \\
      TA
      \ar[r]_-{a}
      &
      A        
    }
  \]
  Now observe that $a\colon  (TA,\mu_A) \to (A,a)$ is a strong epimorphism in $\A^\T$; in fact,
  the laws of Eilenberg-Moore algebras for $\T$ imply that
  $a$ is the coequalizer of
  \[
    \xymatrix@1{
      (TTA, \mu_{TA})
      \ar@<3pt>[r]^-{Ta}
      \ar@<-3pt>[r]_-{\mu_A}
      &
      (TA, \mu_A)}.
  \]
  From Remark \ref{R:prelim}\ref{I:canColim} and the finitarity of the
  functor $T$ we deduce that $Tf\colon (TX,\mu_X)\to (TA,\mu_A)$,
  $f\in \A_\fp/A$, is a filtered colimit in $\A^{\T}$.  It follows
  from Lemma~\cite[Lemma 2.9]{amsw19}\twnote{This is about Lemma \{L:im\}}
  that $(A,a)$ is a directed union of its
  subobjects $m_f$ for $f$ in $\Afp/A$.
  
  Now since $(A,a)$ is finitely generated, $\id_A$ factorizes through
  one of the corresponding colimit injections $m_f\colon  \Im(a \cdot Tf)
  \monoto A$ for some $f\colon X\to A$ in $\Afp/A$. Therefore $m_f$ is  split
  epic, whence an isomorphism, and $A$ is a strong quotient of $(TX,
  \mu_X)$ via $e_f$, as desired. 
  
  \medskip\noindent
  (2)$\implies$(1)
  Let $q\colon (TX, \mu_X) \epito (A,a)$ be a strong epimorphism in $\A^\T$
  with $X$ finitely presentable in $\A$. Factorize $q \cdot \eta_X$ as a
  strong epimorphism followed by a monomorphism in $\A$:
  \[
    \xymatrix{
      X \ar[r]^-{\eta_X} \ar@{->>}[d]_e & TX \ar@{->>}[d]^-q \\
      M \ar@{ >->}[r]_-m & A
    }
  \]
  Then $M$ is finitely generated in $\A$ by Remark~\ref{R:prelim}(5). We shall
  prove that for every subalgebra $u\colon (B,b) \monoto (A,a)$ containing $m$ (i.e.\
  such that there is a morphism $g\colon M \to B$ in $\A$ with $u \cdot g =
  m$) $u$ is invertible. Let $\ext e\colon (TX, \mu_X) \to (B,b)$ be the
  unique extension of $g \cdot e$ to a $\T$-algebra morphism:
  \[
    \xymatrix{
      X
      \ar@{->>}[dd]_e
      \ar[rr]^-{\eta_X}
      &&
      TX
      \ar@{->>}[dd]^-q
      \ar[ld]_{\ext e}
      \\
      &
      B
      \ar[rd]^-u
      \\
      M \ar@{ >->}[rr]_-m \ar[ru]^-{g} && A
    }
  \]
  Then we see that $u \cdot \ext e = q$ because this triangle of
  $\T$-algebra morphisms commutes when precomposed by the universal
  morphism $\eta_X$. Since $q$ is strongly epic, so is $u$, and
  therefore $u$ is an isomorphism, as desired. 
  
  \medskip\noindent (1)$\implies$(2) Let $m\colon M \monoto A$ be a
  finitely generated subobject of $A$ that generates $(A,a)$. By
  Remark~\ref{R:prelim}(5), there exists a strong epimorphism
  $q\colon X \epito M$ in $\A$ with $X$ finitely presentable. The unique
  extension $e = \ext{(m \cdot q)}\colon (TX, \mu_X) \to (A,a)$ to a
  $\T$-algebra morphism is an \emph{extremal epimorphism}, i.e.~if
  $e$ factorizes through a subalgebra $u\colon (B,b) \monoto (A,a)$, then
  $u$ is an isomorphism. To prove this, recall that $u$ is also monic
  in $\A$. Given $e = u \cdot e'$ we use the diagonal fill-in property
  in $\A$:
  \[
    \xymatrix{
      X
      \ar@{->>}[r]^-q
      \ar[d]_{\eta_X}
      &
      M
      \ar@{ >->}[dd]^m
      \ar@{-->}[ldd]
      \\
      TX
      \ar[d]_{e'}
      \\
      B
      \ar@{ >->}[r]_- u
      &
      A
    }
  \]
  Since $m$ generates $(A,a)$, this proves that $u$ is an
  isomorphism. In a complete category every extremal epimorphism
  is strong, thus we have proven (2).

  By Remark~\ref{R:prelim}(5) and the fact that ffp algebras are finitely presentable in $\A^\T$ (see Example \ref{E:ffp}(b)) we have (2)$\implies$(3).
\end{proof}

As usual, by a \emph{congruence} on a $\T$-algebra $(A,a)$ is meant a
subalgebra $(K,k) \monoto (A,a) \times (A,a)$ forming a kernel pair $\ell,r\colon (K,k)
\parallel (A,a)$ of some $\T$-algebra morphism. Given a coequalizer $q\colon (A,a)
\epito (B,b)$  of $\ell, r$ in $\A^\T$, then $(B,b)$ is called the
\emph{quotient algebra of $(A,a)$ modulo $(K,k)$}. 
\begin{definition} \label{D:finGenCong}
  By a \emph{finitely generated congruence} is meant
  a congruence $\ell, r\colon (K,k) \parallel (A,a)$ such that there exists a finitely
  generated subalgebra $m\colon (K',k') \monoto (K,k)$ in $\A^\T$ for which
  the quotient of $(A,a)$ modulo $(K,k)$ is also a coequalizer of $\ell \cdot
  m$ and $r \cdot m$:
  \[
    \xymatrix@1{
      (K',k') \ar@{ >->}[r]^-m
      &
      (K,k) \ar@<3pt>[r]^-\ell \ar@<-3pt>[r]_-r
      &
      (A,a)
      \ar@{->>}[r]^-q
      &
      (B,b).
    }
  \]
\end{definition}
In the next theorem we assume that our base category has regular factorizations,
i.e.~(regular epi,mono)-factorizations.
\begin{theorem}
  \label{T:coequffp}
  Let $\T$ be a regular, finitary monad  on an
  lfp category $\A$ with regular factorizations. For every
  $\T$-algebra $(A,a)$ the following conditions are equivalent:
  \begin{enumerate}
  \item $(A,a)$ is a quotient of an ffp algebra modulo a
    finitely generated congruence,
    
  \item $(A,a)$ is a coequalizer of a parallel pair of $\T$-algebra
    morphisms between ffp algebras:
    \[
      \xymatrix@1{
        (TY, \mu_Y) \ar@<3pt>[r]^-f \ar@<-3pt>[r]_-g & (TX, \mu_X)
        \ar@{->>}[r]^-e & (A,a)
      }
      \qquad
      (X,Y \text{ in }\A_\fp),
      \qquad
      \text{and}
    \]
  \item $(A,a)$ is a finitely presentable object of $\A^\T$. 
  \end{enumerate}
\end{theorem}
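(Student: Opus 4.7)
The plan is to traverse the cycle $(1)\Rightarrow(2)\Rightarrow(3)\Rightarrow(1)$, with $(3)\Rightarrow(1)$ being the crux. For $(2)\Rightarrow(3)$: ffp algebras are finitely presentable in $\A^{\T}$ by Example~\ref{E:ffp}(\ref{ffpIsFp}), and in the lfp category $\A^{\T}$ the finitely presentable objects are closed under finite colimits, so any coequalizer of a parallel pair between ffp algebras is finitely presentable. For $(1)\Rightarrow(2)$: given the congruence data $(K',k')\monoto (K,k)\rightrightarrows (TX,\mu_X)$ with fg witness $(K',k')$, Theorem~\ref{T:fg} yields a strong epimorphism $q\colon (TY,\mu_Y)\epito (K',k')$ from an ffp algebra. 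Since $q$ is epic, $(A,a)$ is also the coequalizer of $\ell\cdot m\cdot q$ and $r\cdot m\cdot q$, establishing~(2).

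For $(3)\Rightarrow(1)$, I first invoke that $\T$ is regular and $\A$ has regular factorizations: $\A^{\T}$ then inherits (regular epi, mono)-factorizations (a standard transfer going back to Manes), so strong epis in $\A^{\T}$ are regular and coequalize their kernel pairs. Since finite presentability implies finite generation, Theorem~\ref{T:fg} gives a strong epi $e\colon (TX,\mu_X)\epito (A,a)$ with $X\in\Afp$, and $e$ is the coequalizer of its kernel pair $\ell,r\colon (K,k)\rightrightarrows (TX,\mu_X)$. Next, $(K,k)$ is the filtered union of its finitely generated subalgebras $m_i\colon (K_i,k_i)\monoto (K,k)$ (a standard lfp fact, using that fg objects are closed under finite colimits and that every canonical-diagram arrow from a fp object factors through its image). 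For each $i$, let $e_i\colon (TX,\mu_X)\epito (B_i,b_i)$ be the coequalizer of $\ell\cdot m_i, r\cdot m_i$, with comparison $h_i\colon (B_i,b_i)\to (A,a)$ satisfying $h_i\cdot e_i=e$. Since coequalizers are themselves colimits and commute with filtered colimits, $(A,a)=\colim_i (B_i,b_i)$ with colimit injections $h_i$.

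The finishing bump-up uses finite presentability twice. Because $(A,a)$ is fp, $\id_{(A,a)}=h_{i_0}\cdot s$ for some $i_0$ and some $s\colon (A,a)\to (B_{i_0},b_{i_0})$. The morphisms $s\cdot e$ and $e_{i_0}$ from $(TX,\mu_X)$ to $(B_{i_0},b_{i_0})$ become equal after post-composition with $h_{i_0}$; since $(TX,\mu_X)$ is itself fp (again Example~\ref{E:ffp}(\ref{ffpIsFp})), there exists $j\ge i_0$ with $c_{i_0 j}\cdot s\cdot e=c_{i_0 j}\cdot e_{i_0}=e_j$, where $c_{i_0 j}\colon (B_{i_0},b_{i_0})\to (B_j,b_j)$ is the connecting morphism. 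Setting $s'=c_{i_0 j}\cdot s$ gives $h_j\cdot s'=\id_{(A,a)}$ and $(s'\cdot h_j)\cdot e_j=s'\cdot e=e_j$; as $e_j$ is epic, $s'\cdot h_j=\id_{(B_j,b_j)}$, so $h_j$ is an isomorphism. Hence $(A,a)\cong (B_j,b_j)$ is the coequalizer of $\ell\cdot m_j, r\cdot m_j$, which together with the fg witness $(K_j,k_j)\monoto (K,k)$ realizes $(A,a)$ as a quotient of the ffp algebra $(TX,\mu_X)$ modulo the finitely generated congruence $(K,k)$, giving~(1).

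The main obstacle is this last step: the factorization of $\id_{(A,a)}$ through a single stage only makes $h_{i_0}$ split epic a priori, and one must invoke the filtered-colimit property a second time, pivoting now on the finitely presentable algebra $(TX,\mu_X)$ on top rather than on $(A,a)$, to reach a stage $j$ at which $h_j$ is genuinely invertible. Regularity of $\T$ is used only in the setup, to ensure that $e$ coequalizes its kernel pair.
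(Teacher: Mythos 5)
Your proof is correct and follows essentially the same route as the paper: the same cycle of implications, with (3)$\Rightarrow$(1) obtained by lifting regular factorizations to $\A^\T$, taking the kernel pair $\ell,r\colon (K,k)\parallel (TX,\mu_X)$ of a regular quotient $e\colon (TX,\mu_X)\epito (A,a)$ from Theorem~\ref{T:fg}, exhibiting $(A,a)$ as a filtered colimit of coequalizers of restrictions of $(\ell,r)$ to small subalgebras of $(K,k)$, and then using finite presentability twice to collapse to a single stage. The only (harmless) variations are that you index by the finitely generated subalgebras of $(K,k)$ directly and run the second finite-presentability argument on $(TX,\mu_X)$, whereas the paper indexes by $\Afp/K$ via the free extensions $(TY,\mu_Y)\to (K,k)$ so that its stage algebras are finitely presentable and the argument can pivot on them, extracting the finitely generated witness only at the end by taking images.
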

\begin{proof}
  (2)$\implies$(3) Since finitely presentable objects are closed under
  finite colimits, this follows from Example \ref{E:ffp}\ref{ffpIsFp}. 

  \medskip\noindent (3)$\implies$(1) First note that the classes of
  regular and strong epimorphisms in $\A^\T$ coincide; indeed, since
  $\T$ preserves regular epimorphisms, the regular factorizations of
  $\A$ lift to $\A^\T$ (see \cite[Proposition~4.17]{Manes76}). Then, by
  Theorem~\ref{T:fg}, $(A,a)$ is a regular quotient of an ffp algebra
  via $q\colon (TX, \mu_X) \epito (A,a)$, say.

  Now take the kernel pair $\ell,r\colon (K,k) \parallel (TX, \mu_X)$ of $q$ in
  $\A^\T$ and note that $q$ is its coequalizer. We are going to prove that the
  congruence $(\ell,r)$ is finitely generated. We first verify some of its
  properties.
  Write $K$ in $\A$ as the
  filtered colimit of its canonical filtered diagram $D_K\colon \Afp/K \to
  \A$ (see Remark~\ref{R:prelim}(2)) with the
  colimit injections $y\colon  Y \to K$. Take the unique extension $\ext y\colon
  (TY,
  \mu_Y) \to (K,k)$ to a $\T$-algebra morphism and form
  the following coequalizer in $\A^{\T}$:
  \[
    \xymatrix@1{
      (TY, \mu_Y) \ar[r]^-{\ext y}
      &
      (K,k) \ar@<3pt>[r]^-\ell \ar@<-3pt>[r]_-r
      &
      (TX, \mu_X)\ar@{->>}[r]^-{e_y}
      &
      (A_y, a_y)
    }.
  \]
  
  (a)~This defines a filtered diagram $\ol D\colon  \Afp/K \to \A^\T$ taking $y$ to $(A_y,a_y)$. In fact,
  for every morphism $f\colon  (Y,y) \to (Z,z)$ in $\Afp/K$ we obtain 
  a $\T$-algebra morphism $a_f\colon  (A_y, a_y) \to (A_z, a_z)$ using the
  following diagram in $\A^\T$ (where we drop the algebra structures):
  \[
    \xymatrix@R-1pc{
      TY \ar[dd]_{Tf} \ar[rd]^-{\ext y}
      &&&
      A_y \ar@{-->}[dd]^{a_f}
      \\
      &
      K \ar@<3pt>[r]^-\ell \ar@<-3pt>[r]_-r
      &
      TX \ar@{->>}[ru]^-{e_y} \ar@{->>}[rd]_-{e_z}
      \\
      TZ \ar[ru]_-{\ext z}
      &&&
      A_z
    }
  \]
  Note that $a_f$ is a regular epimorphism in $\A^\T$. 
  Furthermore, for every $y\colon  Y \to K$ in $\Afp/K$ we also obtain a
  morphism $d_y\colon  A_y \to A$ such that $d_y \cdot e_y = q$:
  \[
    \xymatrix{
      TY \ar[r]^-{\ext y}
      &
      K
      \ar@<3pt>[r]^-\ell \ar@<-3pt>[r]_-r
      &
      TX
      \ar@{->>}[d]_q
      \ar@{->>}[r]^-{e_y}
      &
      A_y
      \ar@{-->}[ld]^-{d_y}
      \\
      && A
    }
  \]
  These morphisms $d_y$ form a cocone on the diagram $\ol D$; indeed, we
  have for every morphism $f\colon  (Y,y) \to (Z,z)$ of $\Afp/K$ that
  \[
    d_z \cdot a_f \cdot e_y = d_z \cdot e_z = q = d_y \cdot e_y,
  \]
  and we conclude that $d_z \cdot a_f = d_y$ since $e_y$ is epic.

  (b)~We now show that $(A,a) = \colim \ol D$ with colimit injections $d_y:
  (A_y,a_y) \to (A,a)$. Given a cocone $b_y\colon  (A_y,a_y) \to (B,b)$ of $\ol D$,
  we prove that it factorizes uniquely through $(d_y)$. We first note that all
  the morphisms $b_y \cdot e_y$ are equal because the diagram is filtered and
  for every morphism $f\colon  (Y,y) \to (Z,z)$ in $\Afp/K$ we have the commutative
  diagram below:
  \[
    \xymatrix{
      & A_y \ar[rd]^-{b_y} \ar[dd]^{a_f}\\
      TX\ar@{->>}[ru]^-{e_y} \ar@{->>}[rd]_-{e_z} && B \\
      & A_z \ar[ru]_-{b_z}
    }
  \]
  Let us call the above morphism $q'\colon  TX \to B$, and observe that for every
  $y\colon  Y \to K$ in $\Afp/K$ we have
  \[
    q' \cdot \ell \cdot \ext y = b_y \cdot e_y \cdot \ell \cdot \ext y
    = b_y \cdot e_y \cdot r \cdot \ext y = q' \cdot r \cdot \ext y.
  \]
  The cocone of morphisms $\ext y\colon  TY \to K$ is collectively epic since so is
  the colimit cocone $y\colon  Y \to K$, and therefore
  $q' \cdot \ell = q' \cdot r$. Thus, there exists a unique
  factorization $h\colon  A \to B$ of $q'$ through $q = \mathsf{coeq}(\ell,r)$,
  i.e.~$h \cdot q = q'$. We now have, for every
  $y\colon  Y \to K$ in $\Afp/K$,
  \[
    h \cdot d_y \cdot e_y = h \cdot q = q' = b_y \cdot e_y,
  \]
  which implies $h\cdot d_y = b_y$ using that $e_y$ is epic.

  Uniqueness of $h$ with the latter property follows immediately from $q$ being
  epic: if $k\colon  A \to B$ fulfils $k \cdot d_y = b_y$ for every $y$ in $\Afp/K$,
  we have
  \[
    k\cdot q = k \cdot d_y \cdot e_y = b_y \cdot e_y = q'=h\cdot q.
  \]
    
  (c)~Now use that $(A,a)$ is finitely presentable in $\A^\T$ to
  see that there exists some $w\colon  W \to K$ in $\Afp/K$ and a
  $\T$-algebra morphism $s\colon  (A,a) \to
  (A_w,a_w)$ such that $d_w \cdot s = \id_A$. Then $s \cdot d_w$ is an
  endomorphism of the $\T$-algebra $(A_w, a_w)$ satisfying $d_w \cdot
  (s\cdot d_w) = d_w$. Since $e_w$ is a coequalizer of a parallel pair of
  $\T$-algebra morphisms between ffp algebras, $(A_w,a_w)$ is finitely
  presentable by Example \ref{E:ffp}\ref{ffpIsFp}. The colimit injection $d_w$ merges $s
  \cdot d_w$ and $\id_{A_w}$, hence there exists a morphism $f\colon  (W,w) \to
  (Y,y)$ of $\Afp/K$ with $a_f$ merging them too, i.e.~such that $a_f \cdot (s \cdot d_w) = a_f$. This
  implies that $d_y\colon  (A_y, a_y) \to (A,a)$ is an isomorphism with
  inverse $a_f \cdot s$. Indeed, we have
  \[
    d_y \cdot (a_f \cdot s) = d_w \cdot s = \id_A, 
  \]
  and for $(a_f \cdot s) \cdot d_y = \id_{A_y}$ we use that $a_f$ is
  epic:
  \[
    (a_f \cdot s) \cdot d_y \cdot a_f = a_f \cdot s \cdot d_w = a_f.
  \]                                    
  Thus $(A,a)$ is a quotient of the ffp algebra $(TX,\mu_X)$ modulo the
  congruence $(\ell,r)$.

  (d)~We are ready to prove that $\ell, r\colon  (K,k)\parallel (TX,\mu_X)$ is a 
  finitely generated congruence. Take the regular factorization of
  $\ext y\colon  (TY, \mu_Y) \to (K,k)$ for $y$ in (c):
  \[
    \ext y = \big(\xymatrix{
      (TY, \mu_Y)\ar@{->>}[r]^-e
      & ~\Im(y^\sharp)~ \ar@{>->}[r]^-m
      & (K,k)}\big).
  \]
  Then $e_y$ is also the coequalizer of $\ell \cdot m$ and
  $r \cdot m$, and $\Im(y^\sharp)$ is a finitely generated $\T$-algebra by
  Theorem~\ref{T:fg}, as desired.
  
  \medskip\noindent (1)$\implies$(2) We are given a regular
  epimorphism $e\colon  (TX, \mu_X) \epito (A,a)$ with $X$ finitely
  presentable in $\A$ and a pair
  $\ell', r'\colon  (K',k') \parallel (TX, \mu_X)$ with $(K',k')$ finitely
  generated, whose coequalizer is $e$. By
  Theorem~\ref{T:fg}, there exists a regular quotient
  $q\colon  (TY, \mu_Y) \epito (K',k')$ with $Y$ finitely presentable in
  $\A$. Since $e$ is a coequalizer of $\ell', r'$, it is also a
  coequalizer of the pair $\ell' \cdot q, r' \cdot q$.
\end{proof}
\begin{openproblem}
  Are (1)--(3) above equivalent for all finitary monads (not necessarily regular
  ones)?
\end{openproblem}

\section{Finitary Monads on Sets}

We have seen in~\cite[Corollary~3.31]{amsw19}
that finitely presentable objects of $[\Set,\Set]_\fin$ are precisely
the finitely generated ones.  In contrast, we will show that in the
category $\Mndf(\Set)$ of finitary monads on $\Set$ the classes of
finitely presentable and finitely generated objects do \emph{not}
coincide.

\begin{remark}
  The finitely generated objects of the category $[\Set,\Set]_\fin$ of all
  finitary set functors were characterized in \cite{amsw19} as precisely the
  \emph{super-finitary} set functors. These are the quotient functors of the
  polynomial functors $H_\Sigma X = \coprod_{n\in \Nat} {\Sigma_n}\times X^n$ for
  signatures $\Sigma$ of finitely many finitary operations.
\end{remark}

\takeout{
\begin{remark}
  We apply Lack's result that the category of finitary monads on an lfp category
  $\A$ is monadic over the category $\op{Sig}(\A)$ of signatures (which we
  recall in Example \ref{E:ff}), see Corollary 3 of \cite{lack99}. By using the
  same proof one can see that the category of finitary endofunctors is also
  monadic over $\op{Sig}(\A)$.
\end{remark}
}

\begin{example} \label{E:ff} As an application of Theorem \ref{T:fg}, we
  generalize from~\cite[Corollary~3.31]{amsw19}
  the fact
  that $[\Set,\Set]_\fin$ has as finitely generated objects precisely the
  super-finitary functors, to all lfp
  categories $\A$. Denote by
  \[
    [\A,\A]_\fin
  \]
  the category of all finitary endofunctors of $\A$. An example is the
  polynomial functor $H_\Sigma$ for every signature $\Sigma$ in the sense
  of Kelly and Power~\cite{kellypower93}. This means a collection of objects $\Sigma_n$ of
  $\A$ indexed by objects $n\in \A_\fp$. Let $|\A_\fp|$ be the discrete category of
  objects of $\A_\fp$, then the functor category
  \[
    \op{Sig}(\A) = \A^{|\A_\fp|}
  \]
  is called the \emph{category of signatures}. Its morphisms from $\Sigma \to
  \Sigma'$ are collections of morphisms $e_n\colon \Sigma_{n}\to \Sigma_n'$ for
  $n\in|\A_\fp|$. The {\em polynomial functor} $H_\Sigma$ is the coproduct of the
  endofunctors $\A(n,-)\bigcdot \Sigma_n$, where $\bigcdot$ denotes copowers of
  $\Sigma_n$, shortly:
  \[
    H_\Sigma X=\coprod_{n\in \A_\fp}
    \A(n,X) \bigcdot \Sigma_n.
  \]
  We obtain an adjoint situation
  \[
    \xymatrix@1{
      [\A,\A]_\fin~
      \ar@<1ex>[r]^-{U}_{\top}
      &
      ~\op{Sig}(\A)
      \ar@<1ex>[l]^-{\Phi}
    }
  \]
  where the forgetful functor $U$ takes a finitary endofunctor $F$ to the
  signature
  \[
    UF = \Sigma\quad\text{with}\quad
    \Sigma_n = Fn\qquad (n\in \A_{\fp})
  \]
  and $\Phi$ takes a signature $\Sigma$ to the polynomial endofunctor
  $\Phi\Sigma = H_\Sigma$.
  The resulting monad $\mathbb{T}$ is given by
  \[
    (T\Sigma)_n = \coprod_{m\in \A_\fp} \A(m,n)\bigcdot \Sigma_m.
  \]
\end{example}
\begin{remark}
  It is easy to see that the forgetful functor $U$ is monadic. Indeed,
  this follows from Beck's theorem since $U$ has a left-adjoint and
  creates all colimits. The latter is clear since colimits of
  functors are formed object-wise, and for finitary functors they are
  determined on the finitely presentable objects.

  Thus, we see that the category $[\A,\A]_\fin$ is equivalent to
  the Eilenberg-Moore category of the monad $\mathbb{T}$. By Theorem \ref{T:fg},
  finitely generated objects of $[\A,\A]_\fin$ are precisely the strong
  quotients of ffp algebras for $\mathbb{T}$. Now by
  Lemma~\ref{L:power},
  a signature $\Sigma$ is finitely presentable in $\A^{|\A_\fp|}$ iff for the
  initial object $0$ of $\A$ we have
  \[
    \Sigma_n = 0\text{ for all but finitely many }n\in \A_\fp
  \]
  and
  \[
    \Sigma_n\text{ is finitely presentable for every }n\in \A_\fp.
  \]
  Let us call such signatures \emph{super-finitary}. We thus obtain the
  following result.
\end{remark}
\begin{corollary} \label{C:ff}
  For an lfp category $\A$, a finitary endofunctor is finitely generated in
  $[\A,\A]_\fin$ iff it is a strong quotient of a polynomial functor $H_\Sigma$
  with $\Sigma$ super-finitary.
\end{corollary}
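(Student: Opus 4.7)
The plan is to apply Theorem~\ref{T:fg} to the finitary monad $\mathbb{T}$ on $\op{Sig}(\A) = \A^{|\A_\fp|}$ introduced in Example~\ref{E:ff}. By the remark preceding the corollary, $U\colon [\A,\A]_\fin \to \op{Sig}(\A)$ is monadic, so $[\A,\A]_\fin$ is equivalent to $\op{Sig}(\A)^{\mathbb{T}}$, and under this equivalence the free $\mathbb{T}$-algebra on a signature $\Sigma$ is precisely the polynomial functor $\Phi\Sigma = H_\Sigma$. Theorem~\ref{T:fg} (equivalence of (2) and (3)) then tells us that a finitary endofunctor $F$ is finitely generated in $[\A,\A]_\fin$ iff it is a strong quotient of an ffp $\mathbb{T}$-algebra, i.e.\ of some $H_\Sigma$ with $\Sigma$ finitely presentable in $\op{Sig}(\A)$.

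It remains to pin down which signatures $\Sigma$ are finitely presentable in $\op{Sig}(\A)$. Since $|\A_\fp|$ is a (small) discrete category, $\op{Sig}(\A)$ is a power of the lfp category $\A$ indexed by the set of objects of $\A_\fp$. I would therefore invoke Lemma~\ref{L:power}, which immediately identifies the finitely presentable objects of such a power with those $\Sigma$ whose components $\Sigma_n$ are all finitely presentable in $\A$ and satisfy $\Sigma_n = 0$ for all but finitely many $n \in \A_\fp$; that is, exactly the super-finitary signatures as defined in the text.

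Combining the two steps gives the desired equivalence: $F \in [\A,\A]_\fin$ is finitely generated iff it is a strong quotient of $H_\Sigma$ for some super-finitary $\Sigma$. There is essentially no obstacle here — everything has been arranged by the preceding lemmas and Example~\ref{E:ff}. The only subtlety worth verifying is that the hypothesis of Theorem~\ref{T:fg} is satisfied, namely that $\mathbb{T}$ is a finitary monad on the lfp category $\op{Sig}(\A)$; but $\op{Sig}(\A)$ is lfp as a power of an lfp category by a small discrete category, and $\mathbb{T} = U\Phi$ is finitary because $U$ creates (hence preserves) filtered colimits, as noted in the remark above. One should also note that ``strong quotient'' in $[\A,\A]_\fin$ coincides with ``strong quotient'' in $\op{Sig}(\A)^{\mathbb{T}}$ under the monadic equivalence, so no ambiguity arises in the statement.
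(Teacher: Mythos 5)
Your proposal is correct and follows essentially the same route as the paper: monadicity of $U$ over $\op{Sig}(\A)$, the equivalence (2)$\iff$(3) of Theorem~\ref{T:fg} applied to the monad $\mathbb{T}$, and Lemma~\ref{L:power} to identify the finitely presentable signatures with the super-finitary ones. The extra checks you mention (finitarity of $\mathbb{T}$, lfp-ness of $\op{Sig}(\A)$, transfer of strong quotients along the monadic equivalence) are exactly the points the paper treats implicitly, so there is nothing to add.
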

\begin{example}
  Another application of results of Section~\ref{sec:alg} is to the category
  \[
    \Mndf(\A)
  \]
  of all finitary monads on an lfp category $\A$. Lack proved in
  \cite{lack99} that this category is also monadic over the category of signatures.
  More precisely, for the forgetful functor $V\colon \Mndf(\A)\to [\A,\A]_\fin$
  the composite
  \[
    UV\colon \Mndf(\A) \to \op{Sig}(\A)
  \]
  is a monadic functor. Recall from Barr \cite{Barr} that every finitary
  endofunctor $H$ generates a free monad; let us denote it by $H^*$. The
  corresponding free monad $\mathbb{T}$ for $UV$ assigns to every signature
  $\Sigma$ the signature derived from the free monad on $\Sigma$ (w.r.t.~$UV$),
  or, equivalently, from the free monad $H_\Sigma^*$ on the polynomial
  endofunctor $H_\Sigma$. Thus the monad $\mathbb{T}$ is given by the following
  rule for $\Sigma$ in $\op{Sig}(\A)$:
  \[
    (T\Sigma)_n = H^*_\Sigma n
    \qquad\text{for all }n\in \A_\fp.
  \] 
  (Example: if $\A=\Set$ then $H_\Sigma^*$ assigns to
  every set $X$ the set $H_\Sigma^*X$ of all $\Sigma$-terms with variables in $X$.)
  In general, it follows from \cite{adamek1974} that the underlying functor of
  $H^*_\Sigma$ is the colimit of the following $\omega$-chain in $[\A,\A]_\fin$:
  \[
    \Id
    \xrightarrow{~w_0~}
    H_\Sigma+\Id
    \xrightarrow{~H_\Sigma w_0 + \id~}
    H_\Sigma(H_\Sigma+\Id)+\Id
    \longrightarrow \cdots
    W_n\xrightarrow{w_n}
    W_{n+1}
    \longrightarrow \cdots
  \]
  Here, $W_0=\Id$ and $W_{n+1} = H_\Sigma W_n+\Id$. Moreover, $w_0\colon \Id\to
  H_\Sigma+\Id$ is the coproduct injection, while $w_{n+1} = H_\Sigma w_n +\id$.
  The monad $H^*_\Sigma$ is thus the free $\mathbb{T}$-algebra on $\Sigma$ and
  the ffp algebras are precisely $H^*_\Sigma$ for $\Sigma$ super-finitary.
\end{example}
\begin{definition}
  Let $\Sigma$ be a signature in an lfp category $\A$.
  \begin{enumerate}
    \item By a \emph{$\Sigma$-equation} is meant a parallel pair
      \[
        f,f'\colon k\longrightarrow H^*_\Sigma n
        \qquad\text{ with }k,n\in \A_\fp
      \]
      of morphisms in $\A$.
    \item A quotient $c\colon H^*_\Sigma \to M$ in $\Mndf(\A)$ is said to \emph{satisfy}
      the equation if its $n$-component merges $f$ and $f'$ (i.e.~$c_n\cdot f =
      c_n\cdot f'$).
    \item By a \emph{presentation} of a monad $\M$ in $\Mndf(\A)$ is meant a
      signature $\Sigma$ and a collection of $\Sigma$-equations such that 
      the least quotient of $H^*_{\Sigma}$ satisfying all of the given equations has the form $c\colon H^*_\Sigma\twoheadrightarrow \M$.
  \end{enumerate}
\end{definition}

\vspace{1mm}

If $\A=\Set$, this is the classical concept of a presentation of a variety by a
signature and equations.
Indeed, given a pair $f,f'\colon 1\to H^*_\Sigma n$, which is a pair of
$\Sigma$-terms in $n$ variables, satisfaction of the equation $f=f'$ in the
sense of General Algebra means precisely $c_n\cdot f = c_n\cdot f'$. And a
general pair $f,f'\colon k\to H^*_\Sigma n$ can be substituted by $k$ pairs of
terms in $n$ variables.
\begin{remark} \label{R:pres}
  \begin{enumerate}
  \item Every finitary monad $\M$ has a presentation. Indeed, since this is an  algebra for
    the monad $\mathbb{T}$ of Example~\ref{E:ff}, it is a coequalizer of a parallel pair of monad
    morphisms between free algebras for $\mathbb{T}$:
    \[
      \xymatrix@1{
      H^*_\Gamma
      \ar@<4pt>[r]^-{\ell}
      \ar@<-1pt>[r]_-{r}
      & H^*_\Sigma
      \ar[r]^-{c}
      & \M
      }
    \]
    To give a monad morphism $\ell$ is equivalent to giving
    a signature morphism
    \[
      \ell_n\colon \Gamma_n\longrightarrow H^*_\Sigma n
      \qquad(n\in |\A_\fp|).
    \]
    Analogously for $r\mapsto (r_n)$. Thus, to say that $c$ merges $\ell$ and
    $r$ is the same as to say that $\M$ satisfies the equations
    $\ell_n,r_n\colon \Gamma_n\to H^*_\Sigma n$  for all $n$. And the above
    coequalizer  $c$ is the least such quotient. 
  \item Every equation $f,f'\colon k\to H^*_\Sigma n = \colim_{r\in \Nat} W_r n$
    can be substituted, for some number $r$ (the ``depth'' of the terms), by an
    equation $g,g'\colon k\to W_r n$. This follows from $k$ being finitely presentable.
  \end{enumerate}
\end{remark}
\begin{theorem}
  Let $\A$ be an lfp category with regular factorizations. A finitary monad is,
  as an object of $\Mndf(\A)$,
  \begin{enumerate}
  \item finitely generated iff it has a presentation by $\Sigma$-equations for a
    super-finitary signature~$\Sigma$,
 and
  \item finitely presentable iff it has a presentation by finitely many
    $\Sigma$-equations for a super-finitary signature $\Sigma$ .
  \end{enumerate}
\end{theorem}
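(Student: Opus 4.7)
The strategy is to apply Theorems~\ref{T:fg} and~\ref{T:coequffp} to the monad $\mathbb{T}$ on $\op{Sig}(\A) = \A^{|\A_\fp|}$ described in the preceding example, whose Eilenberg--Moore category is equivalent to $\Mndf(\A)$. First, the hypotheses are in place: $\op{Sig}(\A)$ is lfp and inherits regular factorizations from $\A$ componentwise, and $\mathbb{T}$ is regular, since $H^*_\Sigma$ is built as an $\omega$-colimit of the approximations $W_r$, each assembled from coproducts and polynomial functors $H_\Sigma$ which preserve componentwise regular epimorphisms in $\Sigma$. By Lemma~\ref{L:power}, the finitely presentable objects of $\op{Sig}(\A)$ are exactly the super-finitary signatures, so the ffp $\mathbb{T}$-algebras are precisely the free monads $H^*_\Sigma$ with $\Sigma$ super-finitary.

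For part~(1), Theorem~\ref{T:fg} says that $\M$ is finitely generated in $\Mndf(\A)$ iff it is a strong quotient $c\colon H^*_\Sigma \twoheadrightarrow \M$ for some super-finitary $\Sigma$. By Remark~\ref{R:pres}, any presentation of $\M$ over a super-finitary signature exhibits $\M$ as exactly such a (regular, hence strong) quotient. Conversely, given $c$, I take as equations all pairs $f, f'\colon k \to H^*_\Sigma n$ ($k, n \in \A_\fp$) with $c_n \cdot f = c_n \cdot f'$; the least quotient $q\colon H^*_\Sigma \twoheadrightarrow Q$ satisfying them factors $c = c' \cdot q$, and $c'$ turns out to be monic via a routine probing argument: any parallel pair $k \to Q_n$ merged by $c'_n$ lifts componentwise along the regular epimorphism $q_n$ to a pair into $H^*_\Sigma n$ which is an equation in the chosen collection, hence merged by~$q_n$.

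For part~(2), Theorem~\ref{T:coequffp} characterizes the finitely presentable objects of $\Mndf(\A)$ as coequalizers of parallel pairs of monad morphisms between ffp algebras,
\[
  \xymatrix@1{
    H^*_\Gamma \ar@<3pt>[r]^-{\ell} \ar@<-3pt>[r]_-r
    & H^*_\Sigma \ar@{->>}[r]^-c & \M
  }
\]
with $\Gamma$ and $\Sigma$ super-finitary. By Remark~\ref{R:pres}, such a pair $(\ell,r)$ is exactly a pair of signature morphisms $\ell_n, r_n\colon \Gamma_n \to H^*_\Sigma n$ for $n \in |\A_\fp|$. Super-finitariness of $\Gamma$ says exactly that $\Gamma_n$ is initial for all but finitely many $n$ (yielding a vacuous equation that may be discarded) and finitely presentable otherwise, so this amounts to finitely many $\Sigma$-equations. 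Conversely, any finite list of equations $f_i, f'_i\colon k_i \to H^*_\Sigma n_i$ packages into a super-finitary signature by setting $\Gamma_n := \coprod_{i\colon n_i = n} k_i$, yielding the required parallel pair whose coequalizer is $\M$.

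The main technical point is the converse direction in~(1): confirming that the collection of \emph{all} equations satisfied by $c$ truly presents $\M$ rather than a strictly larger quotient. This is where the probing argument above is needed, relying on $\A_\fp$ being a strong generator of $\A$ and on pullbacks of regular epimorphisms along morphisms from finitely presentable objects producing enough finitely presentable ``witnesses'' to convert componentwise injectivity of $c'$ into equations in the collection.
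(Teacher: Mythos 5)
Your setup (that $\op{Sig}(\A)$ is lfp with componentwise regular factorizations, that the ffp $\mathbb{T}$-algebras are exactly the $H^*_\Sigma$ with $\Sigma$ super-finitary via Lemma~\ref{L:power}, all of part~(2) via Theorem~\ref{T:coequffp} and Remark~\ref{R:pres}, and the forward direction of part~(1)) follows the paper. The genuine gap is in your converse of part~(1). The ``routine probing argument'' requires every morphism $k \to Q_n$ with $k \in \A_\fp$ to lift along the regular epimorphism $q_n\colon H^*_\Sigma n \to Q_n$, i.e.\ it requires finitely presentable objects of $\A$ to be projective with respect to regular epimorphisms. That holds in $\Set$, but not in a general lfp category with regular factorizations: in the category of abelian groups, $\mathbb{Z}/2$ is finitely presentable, yet its identity does not lift along the regular epimorphism $\mathbb{Z}\twoheadrightarrow \mathbb{Z}/2$. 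Your acknowledged fallback does not repair this: pullback-stability of regular epimorphisms is not among the hypotheses of the theorem, and even granting it you only obtain lifts after precomposing with covers $k'\to k$, so a further joint-epimorphy argument is needed before ``merged after precomposition'' yields ``merged''. As written, the key step ``$c'$ is monic'' is unproven; also, the existence of a least quotient satisfying your large class of equations deserves a word (e.g.\ as the coequalizer of the induced pair out of $H^*_\Gamma$, where $\Gamma_n$ is the coproduct of all equation domains with target component $n$).

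The paper proves this direction without lifting any elements: since $\mathbb{T}$ is a regular monad, the regular factorizations of $\op{Sig}(\A)$ lift to $\Mndf(\A)$, so the strong quotient $c\colon H^*_\Sigma\twoheadrightarrow\M$ supplied by Theorem~\ref{T:fg} is a regular epimorphism, hence the coequalizer of its kernel pair $\ell,r\colon (K,k)\rightrightarrows H^*_\Sigma$; precomposing with the canonical regular epimorphism from the free $\mathbb{T}$-algebra on the underlying signature of $(K,k)$ exhibits $c$ as the coequalizer of a parallel pair out of a free algebra, i.e.\ as the least quotient of $H^*_\Sigma$ satisfying the corresponding equations, exactly as in Remark~\ref{R:pres}(1). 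Replacing your probing argument by this removes every projectivity or pullback-stability assumption. Separately, your one-line justification that $\mathbb{T}$ is regular is too quick: the connecting morphisms of the chain computing $H^*_\Sigma n$ are not built solely from the signature morphism $e$; they also involve $H_\Sigma$ applied to morphisms of $\A$, and polynomial functors need not preserve regular epimorphisms of $\A$ (the hom-functors $\A(m,-)$ need not). The paper establishes regularity of $\mathbb{T}$ by a direct componentwise description of $Te$ as coproducts of copowers of the components of $e$, and your proof needs an argument at that level rather than an appeal to the chain.
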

\begin{proof}
  (1)~Let $\M$ have a presentation with $\Sigma$ super-finitary. Then $\M$ is a
  (regular) quotient of an ffp-algebra $H^*_\Sigma$ for $\mathbb{T}$, thus, it
  is finitely generated by Theorem~\ref{T:fg}.

  Conversely, if $\M$ is finitely generated, it is a (strong) quotient $c\colon
  H^*_\Sigma\twoheadrightarrow \M$ for $\Sigma$ super-finitary. It is sufficient
  to show that $c$ is a regular epimorphism in $\Mndf(\A)$, then the argument
  that $\M$ has a presentation using $\Sigma$ is as in Remark~\ref{R:pres}.

  Since $\A$ has regular factorizations, so does $\op{Sig}(\A) = \A^{|\A_{\fp}|}$.
  And the monad $\mathbb{T}$ on $\op{Sig}(\A)$ given by
  \[
    (T\Sigma)_n = H^*_\Sigma n
    \qquad(n\in \A_\fp)
  \]
  is regular. Indeed, for every regular epimorphism $e\colon
  \Sigma\twoheadrightarrow \Gamma$ in $\op{Sig}(\A)$ we have regular
  epimorphisms $e_n\colon \Sigma_n\twoheadrightarrow \Gamma_n$ in $\A$ ($n\in
  \A_\fp$), and the components of $Te$ are the morphisms
  \[
    (Te)_m = \coprod_{n\in |\A_\fp|} \A(n,m) \bigcdot e_n
    \qquad(m\in \A_\fp).
  \]
  Since coproducts of regular epimorphisms in $\A$ are regular epimorphisms, we
  conclude that each $(Te)_m$ is regularly epic in $\A$. Thus, $Te$ is regularly
  epic in $\op{Sig}(\A)$.
  
  Consequently, the category of $\mathbb{T}$-algebras has regular
  factorizations. Since $c$ is a strong epimorphism, it is regular.

  (2)~We can apply Theorem~\ref{T:coequffp}: an algebra $\M$ for $\mathbb{T}$ is
  finitely presentable iff it is a coequalizer in $\Mndf(\A)$ as follows:
  \[
      \xymatrix@1{
      H^*_\Gamma
      \ar@<4pt>[r]^-{\ell}
      \ar@<-1pt>[r]_-{r}
      & H^*_\Sigma
      \ar[r]^-{c}
      & \M
      }
  \]
  for some super-finitary signatures $\Gamma$ and $\Sigma$. By the preceding
  remark, we can substitute $\ell$ and $r$ by a collection of equations
  $\Gamma_n\rightrightarrows H^*_\Sigma n$, and since $\Gamma$ is
  super-finitary, this collection is finite.
  Therefore, every finitely presentable monad in $\Mndf(\A)$ has a
  super-finitary presentation.

  Conversely, let $\M$ be presented by a super-finitary signature $\Sigma$ and
  equations
  \[
    f_i,f_i'\colon k_i\longrightarrow H^*_\Sigma n_i
    \qquad(i=1,\ldots,r).
  \]
  Let $\Gamma$ be the super-finitary signature with
  \[
    \Gamma_k =\coprod_{\substack{i\in I\\ k_i = k}} k_i
    \qquad(\text{for all }k\in \A_\fp)
  \]
  Then we have signature morphisms
  \[
    f,f'\colon \Gamma\longrightarrow T(\Sigma)
  \]
  derived from the given pairs in an obvious way. For the corresponding monad
  morphisms
  \[
    \bar f,\bar f'\colon H^*_\Gamma \longrightarrow H^*_\Sigma
  \]
  we see that the coequalizer of this pair is the smallest quotient $c\colon
  H^*_\Sigma \twoheadrightarrow \M$ with $c_{n_i}\cdot f_i = c_{n_i}\cdot f_i'$
  for all $i=1,\ldots,n$. This follows immediately from the fact that $c$ is a
  regular epimorphism in $\Mndf(\A)$. Indeed, since $\A$ has regular
  factorizations, so does $\op{Sig}(\A)$, a power of $\A$. Moreover,
  $\mathbb{T}$ is a regular monad, thus the category $\Mndf(\A)$ of its algebras has
  regular factorizations, thus, every strong epimorphism is regular.
\end{proof}
\begin{corollary}
  A finitary monad on $\Set$ is a finitely presentable object of $\Mndf(\Set)$
  iff the corresponding variety of algebras has a presentation (in the classical
  sense) by finitely many operations and finitely many equations.
\end{corollary}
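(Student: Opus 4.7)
The plan is simply to specialize the preceding theorem to $\A=\Set$ and then to translate the two notions involved there -- \emph{super-finitary signature} and \emph{$\Sigma$-equation} -- into the classical vocabulary of \emph{finitely many operations} and \emph{finitely many equations}. So I would begin by unfolding what ``super-finitary'' means when $\A=\Set$: the finitely presentable objects of $\Set$ are the finite sets, so a signature $\Sigma$ with $\Sigma_n\in\Set$ for $n\in|\Set_\fp|$ is super-finitary iff (i) $\Sigma_n=\emptyset$ for all but finitely many finite arities $n$ and (ii) each $\Sigma_n$ is finite. Jointly this says precisely that the total number of operation symbols $\coprod_n \Sigma_n$ is finite, which is exactly the classical notion of a finite signature.

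For the forward direction, assume $\M$ is finitely presentable in $\Mndf(\Set)$. By part~(2) of the preceding theorem, $\M$ admits a presentation consisting of a super-finitary signature $\Sigma$ and finitely many $\Sigma$-equations $f_i,f_i'\colon k_i\to H^*_\Sigma n_i$ with $k_i,n_i$ finite. Each such pair splits into $|k_i|$ pairs of terms $1\to H^*_\Sigma n_i$ by precomposition with the coproduct injections of $k_i=\coprod_{k_i}1$, and a quotient $c$ of monads satisfies $f_i,f_i'$ (in the sense of Definition above) exactly when it satisfies all these $|k_i|$ ordinary term equations. Since each $k_i$ is finite and there are finitely many $i$'s, we obtain a classical presentation of the variety of $\M$-algebras by finitely many operations and finitely many equations.

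For the converse, given a classical presentation with finite signature $\Sigma$ and finitely many equations $s_j=t_j$ with $s_j,t_j\in H^*_\Sigma n_j$ (finitely many variables each), collect them, by arity of the variable set, into parallel pairs $f,f'\colon \Gamma\to T\Sigma$ of signature morphisms where $\Gamma_n$ is the (finite) set of equations with $n_j=n$; this is a super-finitary signature, and the corresponding monad morphisms $\bar f,\bar f'\colon H^*_\Gamma\to H^*_\Sigma$ have $\M$ as coequalizer by the satisfaction translation of Remark~\ref{R:pres}. Now part~(2) of the theorem yields that $\M$ is finitely presentable in $\Mndf(\Set)$.

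No step looks like a real obstacle: the entire content is the observation that super-finitary over $\Set$ collapses to ``finite signature'' and that a $\Sigma$-equation with a finite ``left-hand arity'' $k$ is equivalent to $|k|$ classical equations. The only thing to be careful about is to make sure the translation of satisfaction matches on both sides, which is immediate from the definition of ``a quotient $c$ satisfies $f,f'$'' (namely $c_n\cdot f=c_n\cdot f'$) and the coproduct decomposition of $k$.
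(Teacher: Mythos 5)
Your proposal is correct and follows exactly the route the paper intends: the corollary is the specialization of the preceding theorem, part (2), to $\A=\Set$, combined with the observations (made in the paper right after the definition of presentation) that a super-finitary signature over $\Set$ is just a finite signature and that a $\Sigma$-equation $f,f'\colon k\to H^*_\Sigma n$ with $k$ finite amounts to $k$ classical term equations in $n$ variables. Your bidirectional translation of satisfaction via the coproduct decomposition $k=\coprod_k 1$ is precisely the intended argument, so there is nothing to add.
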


Most of ``everyday'' varieties (groups, lattices, boolean algebras, etc.)
yield finitely presentable monads. Vector spaces over a field $K$ yield a finitely presentable monad iff $K$ is finite -- equivalently, that monad is finitely generated. However, there are finitely generated
monads in $\Mndf(\Set)$ that fail to be finitely presentable. 
We  prove that the classes of finitely presentable and  finitely generated  objects differ in $\Mndf(\Set)$ by relating
monads to monoids via an adjunction. 

\begin{remark}
  \begin{enumerate}
  \item 
  Recall that every set functor has a unique strength. This follows
  from the result by Kock~\cite{Kock} that a strength of an
  endofunctor on a closed monoidal category bijectively corresponds to
  a way of making that functor enriched (see also
  Moggi~\cite[Proposition~3.4]{Moggi1991}).
  \item For every monad
  $(T, \eta, \mu)$ on $\Set$ we have a canonical strength, i.e.~a
  family of morphisms
  \[
    s_{X,Y}\colon  TX \times Y \to T(X \times Y)
  \]
  natural in $X$ and $Y$ and such that the following diagrams commute for all
  sets $X,Y,Z$:
  \begin{eqnarray*}
    \begin{tikzcd}[column sep=0mm,row sep=4mm,baseline=(firstrow.base)]
      |[alias=firstrow]|
      TX \times 1
      \arrow{rr}{s_{1,X}}
      & & T(X\times 1)
      \\
      & TX
      \arrow[draw=none]{ul}[sloped,description]{\cong}
      \arrow[draw=none]{ur}[sloped,description]{\cong}
    \end{tikzcd}
    \quad
    \begin{tikzcd}[column sep=5mm,row sep=4mm,baseline=(firstrow.base)]
      |[alias=firstrow]|
      TX \times Y \times Z 
      \arrow{rrd}[below left]{s_{X, Y \times Z}}
      \arrow{rr}{s_{X,Y} \times Z}
      &&
      T(X \times Y) \times Z
      \arrow{d}{s_{X\times Y,Z}}
      \\
      &&
      T(X\times Y\times Z)
    \end{tikzcd}
    \\
    \begin{tikzcd}[column sep=7mm,row sep=4mm,
      every label/.append style={
        inner sep=2mm,
      },baseline=(firstrow.base)
      ]
      |[alias=firstrow]|
      TX \times Y 
      \arrow{r}{s_{X,Y}}
      &
      T(X \times Y)
      \\
      X \times Y
      \arrow{u}{\mathllap{\eta_X \times Y}}
      \arrow{ru}[below right]{\eta_{X\times Y}}
    \end{tikzcd}
    \begin{tikzcd}[column sep=7mm,row sep=4mm,
      every label/.append style={
        inner sep=2mm,
      },
      baseline=(firstrow.base)
      ]
      |[alias=firstrow]|
      TTX \times Y
      \arrow{d}[swap]{\mu_X \times Y}
      \arrow{r}{s_{TX,Y}}
      &
      T(TX \times Y)
      \arrow{r}{Ts_{X,Y}}
      &
      TT(X\times Y)
      \arrow{d}{\mu_{X\times Y}}
      \\
      TX \times Y
      \arrow{rr}{s_{X,Y}}
      &&
      T(X \times Y)
    \end{tikzcd}
  \end{eqnarray*}
  In fact, one defines the canonical
  strength by the commutativity of the following diagrams
  for every element $y\colon  1 \to Y$:
  \[
    \vcenter{
      \xymatrix{
        TX \times Y \ar[rr]^--{s_{X,Y}} && T(X \times Y) \\
        & TX \times 1 \cong TX \cong T(X \times 1)
        \ar `l[lu] [lu]_(.1){TX \times y} \ar `r[ru] [ru]^(.1){T(X \times y)}
      }
    }
  \]
  \end{enumerate}
\end{remark}
\begin{notation}
\begin{enumerate}
\item
We denote by $R\colon  \Mndf(\Set) \to \Mon$ the functor sending a
monad $(T,\eta,\mu)$ with strength $s$ to the monoid $T1$ with unit
$\eta_1\colon  1\to T1$ and the following multiplication:
\[
  m\colon  T1\times T1 \xrightarrow{s_{1,T1}} T(1\times T1) \xrightarrow{\cong} TT1
  \xrightarrow{\mu_1} T1.
\]
For example, the finite power-set monad $\powf$ induces the monoid
$(\{0,1\},\wedge,1)$ of boolean values with conjunction as multiplication.

\item
We define a functor $L\colon \Mon \to \Mndf(\Set)$ as follows. For every monoid $(M, *, 1_M)$ we
have the monad $LM$ of free $M$-sets with the following object assignment,
unit and multiplication:
\[
  LM(X) = M \times X, 
  \quad
  \eta_X \colon  x \mapsto (1_M,x),
  \quad
  \mu_X \colon  (n, (m,x)) \mapsto (n * m, x).
\]
This extends to a functor $L\colon  \Mon \to \Mndf(\Set)$
by $(Lf)_X=f\times \id_X$ for monoid homomorphisms $f$.
\end{enumerate}
\end{notation}

\begin{proposition}
    We have an adjoint situation $L\dashv R$ with the following unit $\nu$ and
    counit $\epsilon$:
    \begin{align*}
      \nu_M\colon& M\xrightarrow{\ \cong\ } M\times 1 = RLM
                   \\
      \epsilon_T\colon & LRT = T1\times (-)
         \xrightarrow{s_{1,-}}
         T(1\times (-)) \xrightarrow{\ T\cong\ } T,
    \end{align*}
    where $s$ is the strength of $T$.
  \end{proposition}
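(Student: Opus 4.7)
The plan is to verify (i) that $\nu_M$ is a natural-in-$M$ monoid homomorphism, (ii) that $\epsilon_T$ is a natural-in-$T$ monad morphism, and (iii) the two triangle identities.

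For (i): the underlying functor of $LM$ is $M\times (-)$, with unit $x\mapsto (1_M,x)$, multiplication $(n,(m,x))\mapsto (n*m,x)$, and canonical strength given by the associator $s^{LM}_{X,Y}\colon (M\times X)\times Y\to M\times (X\times Y)$. Unfolding the definition of the monoid $RLM = LM(1) = M\times 1$, one sees that its unit is $(1_M,\ast)$ and that its multiplication transports $*$ along $\nu_M$. Hence $\nu_M$ is a monoid homomorphism, and naturality in $M$ is immediate since $\nu_M$ is the canonical right unitor.

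For (ii): naturality of $(\epsilon_T)_X$ in $X$ follows from naturality of $s_{1,-}$ and of the left unitor. The unit of $LRT$ at $X$ sends $x$ to $(\eta_1(\ast),x)$, and the third strength axiom combined with naturality of $\eta$ yields the unit compatibility $(\epsilon_T)_X\circ \eta^{LRT}_X = \eta_X$. The main obstacle is the multiplication compatibility: one must verify that the two composites $T1\times(T1\times X)\to TX$ obtained from the two sides of the monad morphism law for $\mu$ coincide. This is a diagram chase combining the explicit formula $m = \mu_1\circ T(\cong) \circ s_{1,T1}$ for the monoid multiplication on $T1$, the associativity axiom (the second strength axiom), and the $\mu$-compatibility axiom (the fourth strength axiom); these three identities are exactly what is needed to rewrite both composites as the same map.

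The triangle identity $R\epsilon_T\circ \nu_{RT} = \id_{RT}$ unfolds to the composite $T1\xrightarrow{\cong} T1\times 1\xrightarrow{s_{1,1}} T(1\times 1)\xrightarrow{T(\cong)} T1$, which is $\id_{T1}$ by the first strength axiom. The triangle identity $\epsilon_{LM}\circ L\nu_M = \id_{LM}$ unfolds at $X$ to the composite
\[
M\times X\xrightarrow{\nu_M\times X}(M\times 1)\times X\xrightarrow{s^{LM}_{1,X}} M\times (1\times X)\xrightarrow{\;\cong\;} M\times X,
\]
which is the identity because $s^{LM}$ is the associator. Finally, naturality of $\epsilon$ in $T$ follows from the fact that every morphism of finitary set monads commutes with the canonical strengths (the strength on a set functor being uniquely determined by the functor).
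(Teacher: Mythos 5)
Your proposal is correct and follows essentially the same route as the paper: check that $\nu_M$ is a (natural) monoid morphism, that $\epsilon_T$ is a monad morphism via the strength axioms, and verify the two triangle identities using the first strength axiom and the associativity of the product, respectively. Your explicit remark that naturality of $\epsilon$ in $T$ comes from uniqueness of strengths on set functors is a detail the paper leaves implicit, but it does not change the argument.
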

  \begin{proof}
    It is not hard to see that $\nu_M$ is a monoid morphism because the
    monoid structure in $M\times 1 = RLM$ boils down to the monoid structure of
    $M$. Furthermore, $\nu_M$ is clearly natural in $M$. 

    For every monad $T$, $\epsilon_T$
    is a natural  transformation $T1\times (-) \to T$ because of the naturality
    of the strength $s$. The axioms for the strength imply that
    $s_{1,-}\colon  T1\times (-) \to T(1\times (-))$ is a monad morphism by
    straightforward diagram chasing. To see $\nu$ and $\epsilon$ establish an
    adjunction, it remains to check the triangle identities:
    \begin{itemize}
      \item The identity $\epsilon_{LM}\cdot L\nu_M = \id_{LM}$ is just
        the associativity of the product:
        \[
        \begin{tikzcd}[row sep = 1 mm]
          LM
          \arrow{r}{L\nu_M}
          & LRLM \arrow{rr}{\epsilon_{LM}}
          &
          & LM
          \\
          M\times (-)
          \arrow{r}{\nu_M\times (-)}
          & (M\times 1) \times (-)
          \arrow{r}{\cong}
          & M\times (1\times (-))
          \arrow{r}{M\times \cong}
          & M \times (-)
        \end{tikzcd}
        \]
        The composite is obviously the identity on $M\times (-)$.

      \item The identity $R\epsilon_T\cdot \nu_{RT} = \id_{RT}$
        follows directly from the first axiom of  strength:
        \[
        \begin{tikzcd}[row sep = 4 mm, baseline=(T1.base)]
          RT
          \arrow{r}{\nu_{RT}}
          & RLRT \arrow{rr}{R\epsilon_{T}}
          &
          & RT
          \\
          |[alias=T1]|
          T1
          \arrow[shiftarr={yshift={-7mm}}]{rr}{T\nu_1}
          \arrow{r}{\nu_{T1}}
          & T1 \times 1
          \arrow{r}{s_{1,1}}
          & T (1\times 1)
          \arrow{r}{\cong}
          & T 1
        \end{tikzcd}
        \]
      \end{itemize}
      \vspace*{-20pt}
  \end{proof}
\begin{corollary}
  In the category of finitary monads on $\Set$ the classes of finitely
  presentable and finitely generated objects do not coincide.
\end{corollary}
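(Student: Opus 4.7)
The plan is to use the adjunction $L \dashv R$ just constructed, together with Lemma~\ref{lem:fppres}, in order to reduce the claim to the classical existence of a finitely generated monoid that is not finitely presentable.

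First I would check that $L \dashv R$ is a finitary adjunction between lfp categories. Both $\Mon$ and $\Mndf(\Set)$ are lfp (the former as a finitary variety, the latter by Example~\ref{E:ff} and the discussion following it). Filtered colimits in $\Mndf(\Set)$ are created by the forgetful functor into $[\Set,\Set]_\fin$ and are computed pointwise there. Hence $RT = T1$ preserves filtered colimits on underlying sets, and the induced monoid structure on the colimit is the colimit monoid structure; so $R$ is finitary.

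Next I would observe that $L$ is fully faithful and preserves monomorphisms. Full faithfulness is immediate from the unit $\nu_M \colon M \xrightarrow{\cong} M\times 1 = RLM$ being an isomorphism of monoids. For monomorphism preservation: a monoid morphism $f\colon M\to N$ is monic iff injective, in which case every component $(Lf)_X = f \times \id_X$ is injective; since monomorphisms in $\Mndf(\Set)$ are pointwise monomorphisms in $\Set$ (the forgetful functor to $\op{Sig}(\Set)$ creates limits, and monos in $\op{Sig}(\Set)$ are pointwise), $Lf$ is monic.

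By Lemma~\ref{lem:fppres}(2) and~(3), a monoid $M$ is finitely presentable (resp.\ finitely generated) in $\Mon$ iff $LM$ is so in $\Mndf(\Set)$. It therefore suffices to exhibit a finitely generated monoid that is not finitely presentable. This is classical: take any finitely generated group $G$ that has no finite group presentation (for instance, the lamplighter group). Such a $G$ cannot be finitely presentable as a monoid either, since any finite monoid presentation of $G$ becomes a finite group presentation after adjoining inverse generators and the two inversion equations, contradicting the choice of $G$. The main obstacle in the argument is precisely this bookkeeping translation between presentability in $\Mon$ and in $\mathbf{Grp}$; everything else follows routinely from the properties of the adjunction verified above.
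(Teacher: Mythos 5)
Your proposal is correct and follows essentially the same route as the paper: verify that $L \dashv R$ is a finitary adjunction between lfp categories with $L$ fully faithful and mono-preserving, apply Lemma~\ref{lem:fppres} to transfer finite presentability and finite generation along $L$, and then exhibit a finitely generated monoid that is not finitely presentable. The only difference is the source of that monoid --- the paper simply cites Campbell et al., while you derive one from a finitely generated, non-finitely-presentable group such as the lamplighter group via the (classical, and correct, though slightly more work than your ``bookkeeping'' remark suggests) equivalence of finite monoid presentations and finite group presentations for groups.
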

\begin{proof}Note that from the fact that the unit of the adjunction $L \dashv
  R$ is an isomorphism we see that $L$ is fully faithful. Thus, we may regard
  $\Mon$ as a full coreflective subcategory of $\Mndf(\Set)$. Furthermore, the
  right adjoint $R$ preserves filtered colimits; this follows from the fact that
  filtered colimits in $\Mndf(\Set)$ are created by the forgetful functor into
  $[\Set,\Set]_\fin$ where they are formed object-wise. In addition, $L$
  preserves monomorphisms; in fact, for an injective monoid morphism $m\colon  M
  \monoto M'$ the monad morphism $Lm\colon  LM \to LM'$ is monic since all its
  components $m \times \id_X\colon  M \times X \to M' \times X$ are. By
  Lemma~\ref{lem:fppres},
  we therefore have that a monoid $M$ is finitely
  presentable (resp.~finitely generated) if and only if the monad $LM$ is
  finitely presentable (resp.~finitely generated).
  
  In the category $\Mon$ of monoids finitely
  presentable and finitely generated objects do not coincide; see Campbell et
  al.~\cite[Example~4.5]{Campbell96}. Thus the same holds for $\Mndf(\Set)$.
  \end{proof}

\begin{remark}
  For $\lambda$-accessible monads on locally $\lambda$-presentable
  categories all the above results have an appropriate
  statement and completely analogous proofs. We leave the explicit
  formulation to the reader. 
\end{remark}

%
%
\bibliographystyle{myabbrv}
\bibliography{refs}

\begin{thebibliography}{10}

\bibitem{amsw19}
J.~Ad\'amek, S.~Milius, L.~Sousa, and T.~Wi\ss\/mann.
\newblock On finitary functors.
\newblock {S}ubmitted; available online at
  \url{https://arxiv.org/abs/1902.05788}.

\bibitem{AdamekR94}
J.~Ad\'{a}mek and J.~Rosick\'y.
\newblock {\em Locally presentable and accessible categories}.
\newblock Cambridge University Press, 1994.

\bibitem{adamek1974}
J.~Adámek.
\newblock Free algebras and automata realizations in the language of
  categories.
\newblock {\em Comment. Math. Univ. Carolinae}, 015(4):589--602, 1974.

\bibitem{Barr}
M.~Barr.
\newblock Coequalizers and free triples.
\newblock {\em Math. Z.}, 116:307--322, 1970.

\bibitem{Campbell96}
C.~Campbell, E.~Robertson, N.~Ruškuca, and R.~Thomas.
\newblock On subsemigroups of finitely presented semigroups.
\newblock {\em J. Algebra}, 180(1):1 -- 21, 1996.

\bibitem{kellypower93}
G.~Kelly and A.~Power.
\newblock Adjunctions whose counits are coequalizers, and presentations of
  finitary enriched monads.
\newblock {\em J. Pure Appl. Algebra}, 89(1):163 -- 179, 1993.

\bibitem{Kock}
A.~Kock.
\newblock Strong functors and monoidal monads.
\newblock {\em Arch. Math. (Basel)}, 23:113 -- 120, 1972.

\bibitem{lack99}
S.~Lack.
\newblock On the monadicity of finitary monads.
\newblock {\em J. Pure Appl. Algebra}, 140(1):65 -- 73, 1999.

\bibitem{Manes76}
E.~G. Manes.
\newblock {\em Algebraic Theories}.
\newblock Springer-Verlag, 1976.

\bibitem{Moggi1991}
E.~Moggi.
\newblock Notions of computation and monads.
\newblock {\em Inform. and Comput.}, 93(1):55 -- 92, 1991.

\end{thebibliography}

\end{document}